\newcounter{num}[section]
\newenvironment{theorem}
{\refstepcounter{num}%
\bigskip\noindent\nopagebreak[4]{\bf Theorem~\arabic{section}.\arabic{num}. }\it}
\newenvironment{corollary}
{\refstepcounter{num}%
\bigskip\noindent\nopagebreak[4]{\bf Corollary~\arabic{section}.\arabic{num}. }\it}
\newenvironment{lemma}
{\refstepcounter{num}%
\bigskip\noindent\nopagebreak[4]{\bf Lemma~\arabic{section}.\arabic{num}. }\it}
\newenvironment{remark}
{\refstepcounter{num}%
\bigskip\noindent\nopagebreak[4]{\bf Remark~\arabic{section}.\arabic{num}. }}
\newenvironment{example}
{\refstepcounter{num}%
\bigskip\noindent\nopagebreak[4]{\bf Example~\arabic{section}.\arabic{num}. }}
\newcommand{\Zbb}{{\mathbb{Z}}}
\newcommand{\LL}{{\mathcal{L}}}
\newcommand{\Ss}{{\mathbf{S}}}
\newcommand{\V}{{\mathrm{V}}}
\newcommand{\qq}{{\mathrm{q}_\omega}}
\newcommand{\uu}{{\mathrm{u}_\omega}}
\newcommand{\lb}{{\langle}}
\newcommand{\rb}{{\rangle}}
\renewcommand{\c}{{\mathbf{c}}}
\newcommand{\Acal}{{\mathcal{A}}}
\newcommand{\1}{{^{-1}}}
\newcommand{\eps}{{\varepsilon}}
\renewcommand{\b}{{\mathbf{b}}}
\renewcommand{\a}{{\mathbf{a}}}
\newcommand{\s}{{\sigma}}
\newcommand{\pr}{{\prime}}
\newcommand{\Aut}{{\mathrm{Aut}}}
\newcommand{\Inn}{{\mathrm{Inn}}}
\begin{document}

\author{Artem N. Shevlyakov\footnote{The author was supported by Russian Fund of Fundamental Research (project 18-31-00330, the results of Sections 3)  and Russian Science Foundation (project 18-71-10028, the results of Section 4) }}

\title{On group automorphisms in universal algebraic geometry}

\maketitle

\abstract{In this paper we study group equations with occurrences of automorphisms. We describe equational domains in this class of equations. Moreover, we solve a number of open problem posed in universal algebraic geometry}

\section{Introduction}

In the classic approach to algebraic geometry over groups we are dealing with equations over a group $G$ as expressions $w(X)=1$, where $w(X)$ is an element of $G\ast F(X)$ ($F(X)$ is the free group generated by a set of variables $X$). This class of equations was studied in many papers (see~\cite{BMR1,DMR_monograph} and the survey~\cite{romankov_survey} for more details). 

In the current paper we consider a different class of equations over a group $G$:  now $w(X)$ may contain the occurrences of symbols $\{\phi\mid \phi\in \Aut(G)\}$. Any equation of this type is called below an {\it equation with automorphisms}. The study of such equations is justified by many important problems in group theory. For example, the twisted conjugacy problem for a group $G$  is equivalent to the solution of the following equation $\phi(x)u=vx$ for given $u,v\in G$, $\phi\in\Aut(G)$ (also, see this problem in~\cite{romankov_nilpotent} for equations with endomorphisms). 

There is a connection between the ``standard'' group equations and equations with automorphisms. Indeed, for an equation $c_0x_1c_1x_2c_2\ldots c_{k-1}x_kc_k=1$ ($c_i\in G$) we have
\begin{multline*}
c_0x_1c_1x_2c_2\ldots c_{k-1}x_kc_k=(c_0x_1c_0^{-1})(c_0c_1x_2c_1^{-1}c_0^{-1})\ldots\\
 (c_0c_1c_2\ldots c_{k-1}x_kc_{k-1}^{-1}\ldots c_2^{-1}c_1^{-1}c_0^{-1})c_0c_1c_2\ldots c_k=
x_1^{c_0^{-1}}x_2^{(c_0c_1)^{-1}}\ldots x_k^{(c_0c_1c_2\ldots c_{k-1})^{-1}}\prod c_i\\
=\phi_1(x_1)\phi_2(x_2)\ldots \phi_k(x_k)\prod c_i,
\end{multline*}
where each $\phi_i$ is an inner automorphism of a group $G$. Thus, the equation above  is equivalent to the following equation with automorphisms
\[
\phi_1(x_1)\phi_2(x_2)\ldots \phi_k(x_k)c=1\; (c\in G).
\]

This correspondence allows us to study equations with automorphisms by methods developed for group equations. For instance, in~\cite{DMR_monograph} equational domains for group equations were described. In Section~\ref{sec:ed} we solve the similar problem for equations with automorphisms. The results of Section~\ref{sec:ed} provide various examples of equational domains, so it allows to solve a number of open problems posed in universal algebraic geometry (Section~\ref{sec:problem}). Namely, we solve Problem~4.4.7 from~\cite{DMR_monograph}. Notice that our solution implies negative answers for Problems~5.3.1-4 in~\cite{DMR_monograph} (the reduction of Problem~4.4.7 to Problems~5.3.1-4 was shown in~\cite{DMR_monograph}).

\section{Definitions}

All definitions below are derived from~\cite{DMR_monograph}, where all notions of algebraic geometry were formulated for algebraic structures of arbitrary languages.

Denote by $\LL=\{\cdot,^{-1},1\}$ the standard language of group theory. Let us fix a group $G$ and consider 
the extended language $\LL(A)=\{\cdot,^{-1},1\}\cup\{\phi^{(1)}\mid \phi^{(1)}\in A\}$, 
where the unary functional symbols $\phi^{(1)}$  correspond to a group of  automorphisms $A\subseteq \Aut(G)$. Any group $G$ of the language $\LL(A)$ is called an $\LL(A)$-group (implicitly we fix an interpretation of the symbols $\phi$ to the elements of the group $A\subseteq \Aut(G)$).

Using the properties of automorphisms, any $\LL(A)$-term in variables  $X=\{x_1,x_2,\ldots,x_n\}$ is equivalent to a product
\begin{equation}
\phi_1(x_{i_1}^{\eps_1})\phi_2(x_{i_2}^{\eps_2})\ldots\phi_k(x_{i_k}^{\eps_k}),
\label{eq:L(A)-term}
\end{equation}
where $\phi_j\in A$, $x_{i_j}\in X$, $\eps_j\in \{-1,1\}$.

An {\it $\LL(A)$-equation} is an expression $t(X)=1$, where $t(X)$ is an $\LL(A)$-term. {\it An $\LL(A)$-system} is an arbitrary set of $\LL(A)$-equations. The set of all solutions of an $\LL(A)$-system $\Ss$ in $G$ is denoted by $\V_G(\Ss)$. A set $Y\subseteq G^n$ is called  {\it $\LL(A)$-algebraic} if there exists an $\LL(A)$-system $\Ss$ in variables $X=\{x_1,x_2,\ldots,x_n\}$ with $Y=\V_G(\Ss)$. 

An $\LL(A)$-group $G$ is an {\it $\LL(A)$-equational domain} if for any $n$ and arbitrary $\LL(A)$-algebraic sets $Y_1,Y_2\subseteq G^n$ the union $Y=Y_1\cup Y_2$ is also $\LL(A)$-algebraic. 

\begin{theorem}\textup{(\cite{DMR_monograph})}
\label{th:criterion_ED}
An $\LL(A)$-group $G$ is an $\LL(A)$-equational domain iff there exists an $\LL(A)$-system $\Ss$ in variables $x,y$ such that 
\[
\V_G(\Ss)=\{(x,y)\mid x=1\mbox{ or }y=1\}.
\]
\end{theorem}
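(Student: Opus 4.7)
The theorem is a direct transcription of the classical equational-domain criterion into the extended language $\LL(A)$, so the plan is to adapt the two standard implications. For the easy direction, assume $G$ is an $\LL(A)$-equational domain. The sets
\[
Y_1=\{(x,y)\in G^2\mid x=1\},\qquad Y_2=\{(x,y)\in G^2\mid y=1\}
\]
are each $\LL(A)$-algebraic, being defined by the one-equation systems $\{x=1\}$ and $\{y=1\}$. By the definition of equational domain, $Y_1\cup Y_2$ is $\LL(A)$-algebraic, which yields the desired system $\Ss$.

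For the nontrivial direction, assume such $\Ss$ exists, and let $\Ss_1,\Ss_2$ be $\LL(A)$-systems in variables $X=\{x_1,\ldots,x_n\}$ with algebraic sets $Y_1=\V_G(\Ss_1)$, $Y_2=\V_G(\Ss_2)$. For each pair $(t_1(X)=1)\in\Ss_1$ and $(t_2(X)=1)\in\Ss_2$, I would form the substituted system
\[
\Ss[t_1,t_2]=\{\,w(t_1(X),t_2(X))=1\mid (w(x,y)=1)\in\Ss\,\},
\]
and then set $\Ss'=\bigcup_{t_1,t_2}\Ss[t_1,t_2]$. Each $w(t_1(X),t_2(X))$ is again an $\LL(A)$-term in $X$: the normal form~\eqref{eq:L(A)-term} is preserved under substitution because every $\phi\in A$ distributes over products and inverses, and $A$ is closed under composition, so any expression $\phi(\phi_j(x_{i_j}^{\eps_j}))$ simplifies to $(\phi\phi_j)(x_{i_j}^{\eps_j})$ with $\phi\phi_j\in A$.

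The key claim is $\V_G(\Ss')=Y_1\cup Y_2$. First, for a tuple $\bar a\in G^n$ and any pair $(t_1,t_2)$, the pair $(t_1(\bar a),t_2(\bar a))$ lies in $\V_G(\Ss)$ iff $t_1(\bar a)=1$ or $t_2(\bar a)=1$; hence $\V_G(\Ss[t_1,t_2])=\{\bar a\mid t_1(\bar a)=1\text{ or }t_2(\bar a)=1\}$, and so $\V_G(\Ss')=\bigcap_{t_1,t_2}\{\bar a\mid t_1(\bar a)=1\text{ or }t_2(\bar a)=1\}$. The inclusion $Y_1\cup Y_2\subseteq\V_G(\Ss')$ is immediate. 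For the converse, suppose $\bar a\notin Y_1\cup Y_2$; then there exist $(t_1=1)\in\Ss_1$ with $t_1(\bar a)\ne1$ and $(t_2=1)\in\Ss_2$ with $t_2(\bar a)\ne1$, so $\bar a$ fails the equation from $\Ss[t_1,t_2]$ and hence $\bar a\notin\V_G(\Ss')$. The only technical point to verify with care is the substitution closure described above; this is really where the choice of language $\LL(A)$ with $A$ a \emph{group} of automorphisms is used, and no genuine obstacle arises beyond that bookkeeping.
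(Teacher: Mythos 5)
Your proof is correct and is the standard substitution argument for this criterion; note that the paper does not reproduce a proof of Theorem~\ref{th:criterion_ED} at all (it is quoted from~\cite{DMR_monograph}, with Remark~2.2 observing only that the proof for languages with constants carries over to arbitrary group languages). The argument in that reference is essentially the one you give: the easy direction follows from the definition applied to the algebraic sets $\{x=1\}$ and $\{y=1\}$ in $G^2$, and the converse is obtained by substituting each pair of left-hand sides $t_1\in\Ss_1$, $t_2\in\Ss_2$ into the two-variable system $\Ss$ and intersecting, exactly as in your key claim.
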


\begin{remark}
\label{rem:1}
Actually, Theorem~\ref{th:criterion_ED} was proved for group languages with constants, but its proof is valid for arbitrary group languages. 
\end{remark}

\bigskip

Let us recall the results of~\cite{DMR_monograph} related to group equations with no automorphisms. 

Let $H$ be a fixed subgroup of a group $G$. We pick elements of $H$ as constants in the language $\LL(H)=\LL\cup\{h\mid h\in H\}$. Any $\LL(H)$-term in variables $X$ is actually an element of the free product $H\ast F(X)$, where $F(X)$ is the free group generated by the set $X$. An $\LL(H)$-equation is an expression $t(X)=1$, where $t(X)$ is an $\LL(H)$-term. Naturally, one can define the notions of algebraic sets and equational domains in the language $\LL(H)$. As we mentioned in Remark~\ref{rem:1}, Theorem~\ref{th:criterion_ED} holds for $\LL(H)$-equational domains.

It was found in~\cite{DMR_monograph} the complete description of $\LL(H)$-equational domains. 

\begin{theorem}\textup{(\cite{DMR_monograph})}
An $\LL(H)$-group $G$ is an $\LL(H)$-equational domain iff there are not $a,b\in G$, $a,b\neq 1$ such that  
\begin{equation}
[a,h^{-1}bh]=1\mbox{ for any } h\in H
\label{eq:zero_div_condition_classic}
\end{equation}
(here $[x,y]=x^{-1}y^{-1}xy$).
\label{th:about_zero_divisors_classic}
\end{theorem}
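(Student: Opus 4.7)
My plan is to apply the criterion of Theorem~\ref{th:criterion_ED}, which reduces the statement to whether the ``pair of coordinate axes'' $Y = \{(x,y)\in G^2\mid x=1\text{ or }y=1\}$ forms an $\LL(H)$-algebraic set.

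For sufficiency, assuming no pair $a,b\in G\setminus\{1\}$ satisfies~\eqref{eq:zero_div_condition_classic}, I would take
\[
\Ss = \{[x,h^{-1}yh]=1\mid h\in H\}.
\]
The inclusion $Y\subseteq \V_G(\Ss)$ is immediate, and any $(a,b)\in \V_G(\Ss)$ with $a,b\neq 1$ would satisfy~\eqref{eq:zero_div_condition_classic}, contradicting the hypothesis. Hence $\V_G(\Ss)=Y$, and Theorem~\ref{th:criterion_ED} would yield that $G$ is an $\LL(H)$-equational domain.

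For necessity I would argue the contrapositive: given $a,b\in G\setminus\{1\}$ satisfying~\eqref{eq:zero_div_condition_classic}, I show that no $\LL(H)$-system $\Ss$ in variables $x,y$ can cut out $Y$. Since $\V_G(\Ss)$ is an intersection of solution sets of single equations, it would suffice to prove: for every single $\LL(H)$-equation $w(x,y)=1$ whose solution set contains $Y$, the point $(a,b)$ is also a solution. To establish this, I would normalize $w\in H\ast F(x,y)$ via the conjugation trick displayed in the introduction, producing
\[
w(x,y) = (h_1^{-1}z_1 h_1)(h_2^{-1}z_2 h_2)\cdots (h_k^{-1}z_k h_k)\cdot c,
\]
with $h_i, c\in H$ and $z_i\in\{x^{\pm 1}, y^{\pm 1}\}$; since $(1,1)\in Y$, substituting $x=y=1$ forces $c=1$. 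Next I would upgrade the one-sided hypothesis to the two-sided relation
\[
[g^{-1}ag,\; h^{-1}bh]=1\quad\text{for all } g,h\in H,
\]
obtained by conjugating~\eqref{eq:zero_div_condition_classic} (taken with parameter $hg^{-1}\in H$) by $g$. Consequently every $H$-conjugate of $a^{\pm 1}$ commutes with every $H$-conjugate of $b^{\pm 1}$, so evaluating the normal form at $(x,y)=(a,b)$ and sorting the ``$a$-type'' factors before the ``$b$-type'' factors would give
\[
w(a,b) = w(a,1)\cdot w(1,b) = 1\cdot 1 = 1,
\]
using that $w$ vanishes on both axes. Then $(a,b)\in \V_G(\Ss)=Y$, contradicting $a,b\neq 1$.

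The main obstacle will be the passage from the one-sided hypothesis $[a,h^{-1}bh]=1$ to the symmetric ``$H$-biconjugate'' commutation of $a$ and $b$; this is exactly what would legitimise the sorting step. The normal form and the final rearrangement should then reduce to straightforward bookkeeping once that commutation is in hand.
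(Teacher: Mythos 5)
Your proof is correct and takes essentially the same route as the paper (which only cites Theorem~\ref{th:about_zero_divisors_classic} from~\cite{DMR_monograph} but proves the analogous Theorem~\ref{th:about_zero_divisors} in full): the sufficiency direction via $\Ss=\{[x,h^{-1}yh]=1\mid h\in H\}$ is identical. Your necessity argument --- normalizing $w$ into a product of $H$-conjugates of $x^{\pm1},y^{\pm1}$, upgrading the hypothesis to $[g^{-1}ag,h^{-1}bh]=1$ for all $g,h\in H$, and sorting to get $w(a,b)=w(a,1)w(1,b)$ --- is a sound, slightly more hands-on variant of the paper's rewriting $w=u(x)v(y)\prod_i[\phi_i(x),\psi_i(y)]^{w_i(x,y)}$ via commutator identities, resting on the same key observation.
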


\medskip

According to Theorem~\ref{th:about_zero_divisors_classic}, one can obtain (see~\cite{DMR_monograph}) few examples of $\LL(H)$-equational domains: 
\begin{enumerate}
\item the free group $F_2$ of rank $2$ (for $H=F_2$),
\item the alternating group $A_5$ (for $H=A_5$).
\end{enumerate}
Both examples will be used below in the paper.

\section{Equational domains}
\label{sec:ed}

Let us study equations with automorphisms, and the following theorem describes equational domains in the class of $\LL(A)$-groups. Its proof is similar to Theorem~\ref{th:about_zero_divisors_classic} from~\cite{DMR_monograph}.

\begin{theorem}
An $\LL(A)$-group $G$ is an $\LL(A)$-equational domain iff there are not $a,b\in G$, $a,b\neq 1$ such that  
\begin{equation}
[a,\phi(b)]=1\mbox{ for any } \phi\in A. 
\label{eq:zero_div_condition}
\end{equation}
\label{th:about_zero_divisors}
\end{theorem}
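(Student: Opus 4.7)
The plan is to mirror the proof of Theorem 2.2, replacing the conjugation action $h \mapsto h^{-1}bh$ by the action of the automorphism group $A$, and invoking Theorem 2.1 as the bridge between equational domains and the ``cross'' algebraic set
\[
D = \{(x,y)\in G^2 \mid x=1 \text{ or } y=1\}.
\]

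For the easy direction, assume that no $a,b\in G\setminus\{1\}$ satisfy~\eqref{eq:zero_div_condition}. I would take the $\LL(A)$-system
\[
\Ss = \{\,[x,\phi(y)] = 1 \mid \phi\in A\,\}
\]
and check directly that $\V_G(\Ss) = D$. The inclusion $D\subseteq \V_G(\Ss)$ is immediate because $\phi(1)=1$. For the reverse inclusion, any $(x,y)\in\V_G(\Ss)\setminus D$ would furnish the forbidden pair $a=x$, $b=y$, contradicting the hypothesis. Theorem~\ref{th:criterion_ED} then gives that $G$ is an $\LL(A)$-equational domain.

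For the harder direction, I would argue by contrapositive. Assume $a,b\in G\setminus\{1\}$ satisfy~\eqref{eq:zero_div_condition}. The key observation I need first is that in fact $[\phi(a),\psi(b)]=1$ for all $\phi,\psi\in A$: applying $\phi^{-1}$ to this commutator gives $[a,\phi^{-1}\psi(b)]$, which is $1$ by hypothesis since $\phi^{-1}\psi\in A$. Now suppose for contradiction that $G$ is an $\LL(A)$-equational domain. By Theorem~\ref{th:criterion_ED}, there is an $\LL(A)$-system $\Ss$ in variables $x,y$ with $\V_G(\Ss)=D$. I would fix any equation $t(x,y)=1$ in $\Ss$ and write $t$ in the normal form~\eqref{eq:L(A)-term}
\[
t(x,y)=\phi_1(z_{i_1}^{\eps_1})\phi_2(z_{i_2}^{\eps_2})\cdots\phi_k(z_{i_k}^{\eps_k}),\qquad z_1=x,\ z_2=y.
\]
Since every ``$x$-factor'' $\phi_j(a^{\eps_j})$ commutes with every ``$y$-factor'' $\phi_j(b^{\eps_j})$ by the observation above, I can rearrange so that all $x$-factors precede all $y$-factors without altering the value of $t(a,b)$; the internal order within each group is preserved. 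The resulting product factors as $t(a,1)\cdot t(1,b)$, because substituting $y=1$ kills the $y$-factors (each becomes $\phi_j(1)=1$) and leaves the $x$-factors in their original order, and symmetrically for $x=1$. Since $(a,1),(1,b)\in D=\V_G(\Ss)$, both $t(a,1)=1$ and $t(1,b)=1$, whence $t(a,b)=1$. As this holds for every equation in $\Ss$, we get $(a,b)\in\V_G(\Ss)=D$, contradicting $a,b\neq 1$.

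The main obstacle is the commutation rearrangement step: one must verify carefully that relying only on $[a,\phi(b)]=1$ for $\phi\in A$ is sufficient to justify swapping arbitrary $\phi_j(a^{\eps_j})$ past arbitrary $\phi_\ell(b^{\eps_\ell})$ inside $t(a,b)$. This is exactly what the strengthened commutation $[\phi(a),\psi(b)]=1$ provides, and it is the precise analogue of the conjugation trick used in the classical Theorem~\ref{th:about_zero_divisors_classic} (there one moves constants by conjugating by $h\in H$; here one moves factors by applying arbitrary $\phi\in A$).
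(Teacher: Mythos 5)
Your proof is correct, and both directions rest on the same two pillars as the paper's argument: the cross system $\Ss=\{[x,\phi(y)]=1\mid\phi\in A\}$ together with Theorem~\ref{th:criterion_ED} for the easy direction, and for the hard direction the strengthened commutation $[\phi(a),\psi(b)]=1$ for all $\phi,\psi\in A$ (obtained, exactly as in the paper, by applying $\phi$ to $[a,\phi^{-1}\psi(b)]=1$). The one place you diverge is the final step of the hard direction. The paper first rewrites each equation syntactically, via the commutator collection identities, into a normal form $w(x,y)=u(x)v(y)\prod_i[\phi_i(x),\psi_i(y)]^{w_i(x,y)}$ valid for all $(x,y)$, deduces $u\equiv v\equiv 1$ from the points $(x,1),(1,y)$, and then kills each commutator factor at $(a,b)$. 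You instead work semantically at the single point $(a,b)$: since every factor $\phi_j(a)^{\eps_j}$ commutes with every factor $\phi_\ell(b)^{\eps_\ell}$, the evaluated product rearranges to $t(a,1)\cdot t(1,b)=1\cdot 1$. Your route is slightly more elementary --- it avoids justifying the collection normal form (and the auxiliary conjugating terms $w_i$) altogether --- at the cost of proving nothing about the shape of the equations themselves; the paper's normal form is reusable information, yours is a one-point computation. Both are complete proofs, and your rearrangement step is sound: commuting elements have commuting powers, and setting one variable to $1$ deletes exactly the factors of that variable while preserving the order of the rest.
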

\begin{proof}
Let us prove the ``if'' statement. We have that any solution $(x,y)$ of the $\LL(A)$-system $\Ss=\{[x,\phi(y)]=1\mid \phi\in A\}$ satisfies $x=1$ or $y=1$. Thus, $\V_G(\Ss)=\{(x,y)\mid x=1\mbox{ or }y=1\}$, and Theorem~\ref{th:criterion_ED} concludes the proof.

Now, we prove  the ``only if'' part of the theorem. By Theorem~\ref{th:criterion_ED}, there exists an $\LL(A)$-system $\Ss$ with the solution set $\{(x,y)\mid x=1\mbox{ or }y=1\}$. Let $w(x,y)=1$ be an arbitrary $\LL(A)$-equation of $\Ss$. Using the following commutator identities,
\[
[s,t]^{-1}=[t,s],\; [sp,t]=[s,t]^p[p,t],\; [s^{-1},t]=[t,s]^{s^{-1}}
\]
one can equivalently rewrite $w(x,y)$ as a product
\[
w(x,y)=u(x)v(y)\prod_i [\phi_i(x),\psi_i(y)]^{w_i(x,y)}
\] 
where $\phi_i,\psi_i\in A$, and $w_i(x,y)$, $u(x)$, $v(y)$ are $\LL(A)$-terms.

Since $(1,y),(x,1)\in\V_G(\Ss)$ for any $x,y\in G$, then $u(x)=1$ and $v(y)=1$ for all $x,y\in G$. Hence, one can assume that any equation $w(x,y)=1\in\Ss$ is of the form
\[
\prod_i [\phi_i(x),\psi_i(y)]^{w_i(x,y)}=1.
\] 

Assume there exist $a,b\in G$, $a,b\neq 1$ with~(\ref{eq:zero_div_condition}). We have
\begin{multline*}
[a,\phi_i\1(\psi_i(b))]=1\Leftrightarrow a\phi_i\1(\psi_i(b))=\phi_i\1(\psi_i(b))a\Leftrightarrow \\
\phi_i(a\phi_i\1(\psi_i(b)))=\phi_i(\phi_i\1(\psi_i(b))a)\Leftrightarrow \\
\phi_i(a)\psi_i(b)=\psi_i(b)\phi_i(a)\Leftrightarrow [\phi_i(a),\psi_i(b)]=1
\end{multline*}
and $(a,b)\in\V_G(w(x,y)=1)$. Thus, the point $(a,b)$ satisfies any equation of $\Ss$, and we obtain a contradiction $\V_G(\Ss)\neq \{(x,y)\mid x=1\mbox{ or }y=1\}$.
\end{proof}

Let us compare Theorem~\ref{th:about_zero_divisors} and Theorem~\ref{th:about_zero_divisors_classic}. Obviously, Theorem~\ref{th:about_zero_divisors_classic} follows from Theorem~\ref{th:about_zero_divisors} for $A=\Inn_H(G)$, where $\Inn_H(G)=\{\phi_h(g)=h^{-1}gh\mid h\in H\}$ is a subgroup of the group $\Inn(G)$ of inner automorphisms. 

Moreover, if a group $G$ is an $\LL(H)$-equational domain, then $G$ is an $\LL(A)$-equational domain for any $A\supseteq\Inn_H(G)$. Therefore, the alternating group $A_5$ is an $\LL(A)$-equational domain for $A=\Aut(A_5)$. The free group $F_2$ of rank $2$ is also an $\LL(A)$-equational domain for $A=\Aut(F_2)$. However, the following statement provides $F_2$ to be an equational domain with a cyclic group of automorphisms.

\begin{example}
Let $F_2$ be the free group of rank $2$, and $a,b$ be free generators. Let $\phi$ denote the automorphism $\phi(a)=b$, $\phi(b)=a$. Then Theorem~\ref{th:about_zero_divisors} states that $F_2$ is an $\LL(A)$-equational domain for $A=\lb \phi\rb$.
\end{example}

\medskip

The following two statements also follow from Theorem~\ref{th:about_zero_divisors}.

\begin{corollary}
If a group $G$ has a nontrivial center $Z(G)$, then $G$ is not an $\LL(A)$-equational domain for any $A\subseteq \Aut(G)$.
\end{corollary}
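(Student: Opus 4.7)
The plan is to invoke Theorem~\ref{th:about_zero_divisors} by exhibiting a pair $(a,b)$ of nontrivial elements of $G$ satisfying~(\ref{eq:zero_div_condition}), namely $[a,\phi(b)]=1$ for every $\phi\in A$. Since the contrapositive of Theorem~\ref{th:about_zero_divisors} asserts that the existence of such a pair prevents $G$ from being an $\LL(A)$-equational domain, producing one witness pair will close the argument.

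First I would fix any nontrivial $b\in Z(G)$, which exists by the hypothesis $Z(G)\neq 1$. The key observation is the standard fact that every automorphism of a group preserves its center: if $\phi\in\Aut(G)$ and $z\in Z(G)$, then for arbitrary $g\in G$ we have $\phi(z)g=\phi(z\phi^{-1}(g))=\phi(\phi^{-1}(g)z)=g\phi(z)$, so $\phi(z)\in Z(G)$. Applying this to our chosen $b$, we conclude $\phi(b)\in Z(G)$ for all $\phi\in A\subseteq\Aut(G)$, and hence $\phi(b)$ commutes with every element of $G$.

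Therefore I would take, say, $a=b$ (or any nontrivial element of $G$); then $a\neq 1$, $b\neq 1$, and $[a,\phi(b)]=1$ for all $\phi\in A$ because $\phi(b)$ is central. This is precisely condition~(\ref{eq:zero_div_condition}) of Theorem~\ref{th:about_zero_divisors}, so $G$ fails to be an $\LL(A)$-equational domain. There is essentially no obstacle in this proof; the entire content is the reduction to Theorem~\ref{th:about_zero_divisors} together with invariance of the center under automorphisms.
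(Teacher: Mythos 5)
Your proof is correct and takes essentially the same route as the paper: exhibit a nontrivial central element as the witness pair $(a,b)$ for condition~(\ref{eq:zero_div_condition}) and invoke Theorem~\ref{th:about_zero_divisors}. The paper's version is marginally shorter --- it takes $a=b$ central and observes that $a$, being central, already commutes with $\phi(a)$, so the (correct) auxiliary fact that automorphisms preserve the center is not needed.
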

\begin{proof}
Let $a\in Z(G)\setminus\{1\}$ be a central element. Hence, $a$ commute with any $\phi(a)$, and the pair $(a,a)$ satisfies~(\ref{eq:zero_div_condition}) for all $\phi$.
\end{proof}

\begin{corollary}
Let $G$ be an $\LL(A_0)$-equational domain for some $A_0\subseteq \Aut(G)$, and $H=\Pi G$ be a direct power of $G$ indexed by a set $I$. In other words, any element of $H$ is an ordered tuple $(g_i\mid i\in I)$. Let $\mathcal{P}$ be a set of permutations of $I$ such that $\mathcal{P}$ is transitive on $I$ (i.e. for any pair $i,j\in I$ there exists $\pi\in\mathcal{P}$ with $\pi(i)=j$). Let us define automorphisms of $H$ as follows:
\begin{equation}
\label{eq:f_phi}
f_\phi((g_i \mid i\in I))=(\phi(g_i) \mid i\in I),
\end{equation}
\begin{equation}
\label{eq:sigma_phi}
\s_\pi((g_i \mid i\in I))=(g_{\pi(i)} \mid i\in I),
\end{equation}
where $\phi\in A_0$, $\pi\in \mathcal{P}$.
Let $A\subseteq \Aut(H)$ denote the group generated by $\{f_\phi,\s_\pi\mid \phi\in A_0,\pi\in \mathcal{P}\}$. Then the $\LL(A)$-group $H$ is an $\LL(A)$-equational domain.
\label{cor:ed_for_direct_powers}
\end{corollary}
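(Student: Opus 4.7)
The plan is to apply Theorem~3.1 in the contrapositive direction. Assume $H$ is \emph{not} an $\LL(A)$-equational domain; then by Theorem~3.1 there exist elements $a=(a_i)_{i\in I}$ and $b=(b_i)_{i\in I}$ in $H$, both different from $1$, such that $[a,\alpha(b)]=1$ for every $\alpha\in A$. The goal is to manufacture, from these data, a pair of nontrivial elements $a',b'\in G$ satisfying the analogous zero-divisor condition~(\ref{eq:zero_div_condition}) over $A_0$, which will contradict the hypothesis on $G$ via Theorem~3.1.

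First I would fix coordinates: choose $i_0\in I$ with $a_{i_0}\neq 1$ and $j_0\in I$ with $b_{j_0}\neq 1$, both of which exist because $a,b\neq 1$. Next, for an arbitrary $\phi\in A_0$ and $\pi\in\mathcal{P}$, I note that the composition $f_\phi\s_\pi$ lies in $A$ and that a direct unwinding of~(\ref{eq:f_phi}) and~(\ref{eq:sigma_phi}) gives
\[
\bigl(f_\phi\s_\pi(b)\bigr)_i=\phi(b_{\pi(i)}) \quad\text{for every } i\in I.
\]
Since operations in the direct power $H=\Pi G$ are computed coordinatewise, the commutator identity $[a,f_\phi\s_\pi(b)]=1$ in $H$ is equivalent to the family of identities $[a_i,\phi(b_{\pi(i)})]=1$ in $G$, one for each index $i\in I$.

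Finally, I would specialize to the coordinate $i=i_0$ and exploit the transitivity of $\mathcal{P}$: there is some $\pi\in\mathcal{P}$ with $\pi(i_0)=j_0$, so substituting yields $[a_{i_0},\phi(b_{j_0})]=1$ in $G$ for every $\phi\in A_0$. Setting $a'=a_{i_0}$ and $b'=b_{j_0}$ — both nontrivial by construction — contradicts Theorem~3.1 applied to the $\LL(A_0)$-equational domain $G$. The only genuinely nontrivial point in the argument is the single appeal to transitivity of $\mathcal{P}$, which is precisely what allows a ``bad'' pair coming from unrelated coordinates $i_0,j_0$ in $H$ to be realigned into a single coordinate of $G$; the rest is a coordinatewise bookkeeping of how $f_\phi$ and $\s_\pi$ act on tuples.
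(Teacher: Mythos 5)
Your proposal is correct and is essentially the paper's own argument: both reduce the question to the zero-divisor criterion of Theorem~3.1, use the transitivity of $\mathcal{P}$ (via some $\s_\pi$) to align a nontrivial coordinate of $a$ with one of $b$, and then invoke the hypothesis on $G$ coordinatewise; you merely phrase it in the contrapositive while the paper argues directly that $[\a,f_\phi\s_\pi(\b)]\neq 1$ for suitable $\phi,\pi$.
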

\begin{proof}
Let us take $\a=(a_i\mid i\in I),\b=(b_i\mid i\in I)\in H$, $\a,\b\neq 1$. 
Since $\mathcal{P}$ transitively acts on $I$, there exists $\psi\in A$ and an index $i\in I$ such that $a_i\neq 1$, $c_i\neq 1$ where $\psi(\b)=\c=(c_i\mid i\in I)$.

Since $G$ is an $\LL(A_0)$-equational domain, there exists $\phi\in A_0$ with
\[
[a_i,\phi(c_i)]\neq 1.
\] 
Therefore, 
\[
[\a,f_\phi(\psi(\b))]\neq 1,
\] 
and Theorem~\ref{th:about_zero_divisors} completes the proof.

\end{proof}

\section{One problem from universal algebraic geometry}
\label{sec:problem}

The book~\cite{DMR_monograph} contains an open problem (Problem~4.4.7), which can be equivalently formulated as follows: is there an algebraic structure $\Acal$ of an appropriate language $L$ such that
\begin{enumerate}
\item $\Acal$ is an $L$-equational domain;
\item $\Acal$ is $\qq$-compact;
\item $\Acal$ is not $\uu$-compact.
\end{enumerate}

We solve this problem in the class of $\LL(A)$-groups. Let us give all necessary definitions.

Let $A\subseteq\Aut(H)$ be a subgroup of automorphisms of a group $H$. An $\LL(A)$-group $H$ is {\it $\qq$-compact} if for any $\LL(A)$-system $\Ss$ and an $\LL(A)$-equation $w(X)=1$ such that
\begin{equation}
\V_{H}(\Ss)\subseteq \V_{H}(w(X)=1)
\label{eq:q_compact1}
\end{equation}
there exists a finite subsystem $\Ss^\prime\subseteq\Ss$ with
\begin{equation}
\V_{H}(\Ss^\pr)\subseteq \V_{H}(w(X)=1).
\label{eq:q_compact2}
\end{equation}

An $\LL(A)$-group $H$ is {\it $\uu$-compact} if for any $\LL(A)$-system $\Ss$ and $\LL(A)$-equations $w_i(X)=1$ ($1\leq i\leq m$) such that
\begin{equation}
\V_{H}(\Ss)\subseteq \bigcup_{i=1}^m \V_{H}(w_i(X)=1)
\label{eq:u_compact1}
\end{equation}
there exists a finite subsystem $\Ss^\prime\subseteq\Ss$ with
\begin{equation}
\V_{H}(\Ss^\pr)\subseteq \bigcup_{i=1}^m \V_{H}(w_i(X)=1)
\label{eq:u_compact2}
\end{equation}

Let us define a group solving the problem above. Let $G$ be a finite group such that $G$ is an $\LL(A_0)$-equational domain for $A_0=\Aut(G)$ (for example, one may take $G=A_5$). Following Corollary~\ref{cor:ed_for_direct_powers}, we define the $\LL(A)$-group $H=\Pi G$ for $I=\mathbb{Z}$, $\mathcal{P}=\{\pi\}$ (where $\pi$ is a permutation $\pi(n)=n+1$ over $\mathbb{Z}$), and $A$ is generated by the automorphisms $f_\phi,\s_\pi$~(\ref{eq:f_phi},\ref{eq:sigma_phi}). 

We denote the subgroup generated by $\{f_\phi\mid \phi\in \Aut(G)\}\subseteq A$  by $A_G$. The automorphism $\s_\pi$ is denoted by $\s$ below. By the definition, $\s$ acts on an element $(g_i\mid i\in\Zbb)$ by 
\[
\s(g_i)=g_{i+1}.
\]


%

Thus, we should prove that $H$ is 
\begin{enumerate}
\item an $\LL(A)$-equational domain (it immediately follows from Corollary~\ref{cor:ed_for_direct_powers});
\item $\qq$-compact (Lemma~\ref{l:main});
\item not $\uu$-compact (Lemma~\ref{l:u_compact}).
\end{enumerate}

Below we will use the following denotation
\[
\s_k(x)=
\begin{cases}
\underbrace{\s(\s(\ldots\s}_{\mbox{$k$ times}}(x)\ldots))\mbox{ for }k>0,\\
\underbrace{\s^{-1}(\s^{-1}(\ldots\s^{-1}}_{\mbox{$k$ times}}(x)\ldots))\mbox{ for }k<0,\\
x\mbox{ for }k=0
\end{cases}
\]

The automorphism $\s$ commute with any $f_\phi$, i.e. $\s(f_\phi(h))=f_\phi(\s(h))$ for all $h\in H$. Hence any equation over the $\LL(A)$-group $H$ can be written in the following form
\begin{equation}
\s_{k_1}(f_1(x_{j_1}^{\eps_1}))\s_{k_2}(f_2(x_{j_2}^{\eps_2}))\ldots\s_{k_l}(f_l(x_{j_l}^{\eps_l}))=1,
\label{eq:equation_over_H}
\end{equation}
where $f_i\in A_G$, $\eps_i\in\{-1,1\}$, $k_j\in\Zbb$.

\begin{lemma}
\label{l:u_compact}
The $\LL(A)$-group $H$ is not $\uu$-compact.
\end{lemma}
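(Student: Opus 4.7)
The plan is to exhibit one specific $\LL(A)$-system $\Ss$ and two $\LL(A)$-equations $w_1, w_2$ that together realize the failure of $\uu$-compactness. The candidate suggested by the equational-domain theorem is
\[
\Ss = \{[x,\phi(y)]=1 \mid \phi \in A\}, \qquad w_1:\ x=1, \qquad w_2:\ y=1,
\]
in the variables $X = \{x,y\}$. Since $H$ is an $\LL(A)$-equational domain by Corollary~\ref{cor:ed_for_direct_powers}, Theorem~\ref{th:about_zero_divisors} yields $\V_H(\Ss) = \{(\a,\b) \mid \a=1 \text{ or } \b=1\} = \V_H(w_1) \cup \V_H(w_2)$, so the hypothesis~(\ref{eq:u_compact1}) is satisfied at once.

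The real work is to show that, for every finite subsystem $\Ss^\pr \subseteq \Ss$, there is a point $(\a,\b) \in \V_H(\Ss^\pr)$ with $\a,\b \neq 1$, which violates~(\ref{eq:u_compact2}). Here I will use the commutation $\s f_\phi = f_\phi \s$ recorded just before~(\ref{eq:equation_over_H}), so that every element of $A$ factors as $\s_k f_\psi$ with $k \in \Zbb$ and $\psi \in \Aut(G)$. For a given finite $\Ss^\pr = \{[x,\phi_i(y)]=1 \mid 1 \leq i \leq N\}$ I write $\phi_i = \s_{k_i} f_{\psi_i}$, set $K = \max_i |k_i|$, fix any integer $M > K$ and any nontrivial $g \in G$, and define $\a,\b \in H$ by placing $g$ at coordinate $0$ of $\a$ and at coordinate $M$ of $\b$, all other coordinates being $1$.

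The crux is then a straightforward disjoint-support computation: $f_{\psi_i}(\b)$ carries $\psi_i(g)$ at coordinate $M$ and $1$ elsewhere, so $\phi_i(\b) = \s_{k_i}(f_{\psi_i}(\b))$ carries $\psi_i(g)$ at coordinate $M - k_i$ and $1$ elsewhere, whereas $\a$ is nontrivial only at coordinate $0$. By the choice of $M$ one has $M - k_i \neq 0$ for every $i$, so the supports of $\a$ and $\phi_i(\b)$ in the direct product $H = \prod_{\Zbb} G$ are disjoint; hence these elements commute and $[\a,\phi_i(\b)] = 1$ for every $i$. Therefore $(\a,\b) \in \V_H(\Ss^\pr)$ while $\a, \b \neq 1$, breaking the inclusion required by~(\ref{eq:u_compact2}). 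I do not expect any substantive obstacle in this argument; the essential feature being exploited is precisely that the index set $I = \Zbb$ is infinite, which allows the nontrivial coordinate of $\b$ to be shifted out of the way of $\a$ simultaneously for any finite collection of bounded shifts appearing in $\Ss^\pr$.
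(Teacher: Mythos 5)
Your proposal is correct and follows essentially the same route as the paper: the same witness system $\Ss=\{[x,\phi(y)]=1\mid\phi\in A\}$ with the cover $\V_H(x=1)\cup\V_H(y=1)$, and the same disjoint-support construction placing $g$ at coordinate $0$ of $\a$ and at a coordinate beyond the maximal shift occurring in the finite subsystem for $\b$. The only cosmetic difference is that you make the factorization $\phi=\s_k f_\psi$ and the resulting support computation fully explicit, where the paper just observes that $\phi(\b)$ is trivial at coordinate $0$.
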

\begin{proof}
Since $H$ is an $\LL(A)$-equational domain, there are no $\a,\b\in H$ such that $\a,\b\neq 1$ and $[\a,\phi(\b)]=1$ for any $\phi\in A$. Hence, any solution of the $\LL(A)$-system $\Ss=\{[x,\phi(y)]=1 \mid \phi\in A\}$ satisfies either $x=1$ or $y=1$. Thus, the following inclusion 
\[
\V_H(\Ss)\subseteq\V_H(x=1)\cup \V_H(y=1).
\]
holds.

Let $\Ss^\pr$ be a finite subsystem of $\Ss$ and $n=\max\{|k|\mid \s_k\mbox{ occurs in }\Ss^\pr\}$.
Define $\a=(a_i\mid i\in\Zbb)$, $\b=(b_i\mid i\in \Zbb)$ such that
\[
a_i=\begin{cases}
g,\mbox{ if }i=0\\
1,\mbox{ otherwise}
\end{cases}
\; 
b_i=\begin{cases}
g,\mbox{ if }i=n+1\\
1,\mbox{ otherwise}
\end{cases}
\]
where $g\in G\setminus\{1\}$.

Let $A^\pr=\{\phi\mid [x,\phi(y)]=1\in\Ss^\pr\}$ (i.e. $A^\pr$ is the set of all $\phi$ such that the equation $[x,\phi(y)]=1$ belongs to $S^\pr$) be a finite set of automorphisms. By the choice of $\b$, the element $\phi(\b)$ has $1$ at the 0-th coordinate for each $\phi\in A^\pr$. Therefore, $\phi(\b)$ commutes with $\a$ and we obtain $(\a,\b)\in \V_H(\Ss^\pr)$. Since $\a\neq 1$, $\b\neq 1$, then the inclusion  
\[
\V_H(\Ss^\pr)\subseteq\V_H(x=1)\cup \V_H(y=1)
\]
fails. Thus, $H$ is not $\uu$-compact.
\end{proof}

There is a correspondence between $\LL(A)$-systems over $H$ and $\LL(A_0)$-systems over $G$. Let $\Ss$ be an $\LL(A)$-system in variables $X=\{x_1,x_2,\ldots,x_n\}$. The system $\Ss$ defines an $\LL(A_0)$-system $\gamma(\Ss)$ over $G$ in infinite number of variables $Y=\{y_{ij}\mid i\in\Zbb, 1\leq j\leq n\}$ (below $\eps_i\in\{-1,1\}$):
\begin{multline}
\label{eq:gamma_S}
\s_{k_1}(f_1(x_{j_1}^{\eps_1}))\s_{k_2}(f_2(x_{j_2}^{\eps_2}))\ldots\s_{k_l}(f_l(x_{j_l}^{\eps_l}))=1
\Leftrightarrow\\
f_1(y_{k_1+k\ j_1}^{\eps_1})f_2(y_{k_2+k\ j_2}^{\eps_2})\ldots f_l(y_{k_l+k\ j_l}^{\eps_l})=1\in \gamma(\Ss)\mbox{ for all $k\in \Zbb$}.
\end{multline}

In other words, $\gamma(\Ss)$ is the coordinate-wise version of $\Ss$ over the direct power $H=\Pi G$.

\begin{example}
If $\Ss=\{\s(x_1)x_2=1\}$ then 
\begin{multline*}
\gamma(\Ss)=\{\ldots,y_{-11}y_{-22}=1,y_{01}y_{-12}=1,y_{11}y_{02}=1,y_{21}y_{12}=1,y_{31}y_{22}=1,\ldots\}=\\
\{y_{k1}y_{(k-1)2}=1\mid k\in\Zbb\}.
\end{multline*}
\label{ex:ex}
\end{example}

By the definition, any $\LL(A_0)$-equation $u(Y)=1\in\gamma(\Ss)$ may come from several  $\LL(A)$-equations $W=\{w_i(X)=1\}$ of the system $\Ss$. Let us take an arbitrary equation $w_i(X)=1$ from $W$ and denote this correspondence by $\gamma^{-1}(u(Y)=1)=\{w_i(X)=1\}$.

\begin{remark}
Below we will omit brackets in map compositions, i.e. we will write $\alpha\beta(x)$ instead of $\alpha(\beta(x))$. 
\end{remark}


\begin{lemma}
For any $\LL(A_0)$-equation 
\begin{equation}
f_1(y_{i_1\ j_1}^{\eps_1})f_2(y_{i_2\ j_2}^{\eps_2})\ldots f_l(y_{i_l\ j_l}^{\eps_l})=1
\label{eq:222}
\end{equation}
and any number $k\in\Zbb$ the equation 
\begin{equation}
f_1(y_{i_1+k\ j_1}^{\eps_1})f_2(y_{i_2+k\ j_2}^{\eps_2})\ldots f_l(y_{i_l+k\ j_l}^{\eps_l})=1
\label{eq:333}
\end{equation}
also belongs to $\gamma(\Ss)$.

Further, if $P=(p_{ij}\mid i\in\Zbb, 1\leq j\leq n)\in\V_G(\gamma(\Ss))$ then any shift $\s_k(P)=(s_{ij}\mid i\in\Zbb, 1\leq j\leq n)$, $s_{ij}=p_{i+k\ j}$ ($k\in\Zbb$) is also a solution of $\gamma(\Ss)$.
\label{l:homogeneity_of_system}
\end{lemma}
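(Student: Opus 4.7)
The plan is to exploit the translation-invariance that is already built into the definition of $\gamma(\Ss)$ in (\ref{eq:gamma_S}). First I would address the first assertion by tracing (\ref{eq:222}) back to an $\LL(A)$-equation in $\Ss$. By the construction of $\gamma(\Ss)$, the equation (\ref{eq:222}) arises from some equation
\[
\s_{k_1}(f_1(x_{j_1}^{\eps_1}))\s_{k_2}(f_2(x_{j_2}^{\eps_2}))\ldots\s_{k_l}(f_l(x_{j_l}^{\eps_l}))=1
\]
in $\Ss$, together with a choice of integer $k_0$ for which $i_m=k_m+k_0$ for every $1\le m\le l$. But the very same $\LL(A)$-equation generates one element of $\gamma(\Ss)$ for \emph{every} integer shift; replacing $k_0$ by $k_0+k$ in (\ref{eq:gamma_S}) produces precisely (\ref{eq:333}).

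For the second assertion I would deduce it from the first. Substituting $\s_k(P)$ into an arbitrary equation of $\gamma(\Ss)$ of the form (\ref{eq:222}) replaces each variable $y_{i_m\,j_m}$ by $s_{i_m\,j_m}=p_{i_m+k,\,j_m}$, so the resulting identity in $G$ is exactly the evaluation of the shifted equation (\ref{eq:333}) at the original tuple $P$. Since the first part puts (\ref{eq:333}) into $\gamma(\Ss)$ and $P\in\V_G(\gamma(\Ss))$, the identity holds; as the starting equation was an arbitrary member of $\gamma(\Ss)$, we conclude $\s_k(P)\in\V_G(\gamma(\Ss))$.

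The argument is essentially index bookkeeping and there is no serious conceptual obstacle; the only point that requires care is the direction of the shift in the definition of $\s_k$ versus the reindexing in (\ref{eq:gamma_S}). One should double-check against the conventions fixed in Section~\ref{sec:problem} and illustrated in Example~\ref{ex:ex} to make sure the signs of $k$ line up (in particular, the action $s_{ij}=p_{i+k,\,j}$ on tuples must be matched by adding $+k$, not $-k$, to the indices of the witnessing equation in $\gamma(\Ss)$).
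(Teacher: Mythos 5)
Your proposal is correct and follows essentially the same route as the paper: the first claim is read off from the definition of $\gamma(\Ss)$ in (\ref{eq:gamma_S}), and the second claim is obtained by observing that evaluating an equation of the form (\ref{eq:222}) at $\s_k(P)$ is the same as evaluating the shifted equation (\ref{eq:333}) at $P$, which holds since (\ref{eq:333}) lies in $\gamma(\Ss)$ by the first part. The only cosmetic difference is that the paper phrases the second step as a proof by contradiction, whereas you argue directly; your sign bookkeeping for $s_{ij}=p_{i+k\,j}$ matches the paper's convention.
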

\begin{proof}
Observe that the system $\Ss$ from Example~\ref{ex:ex} clearly satisfies the statements of this lemma.

The first statement directly follows from the definition of the system $\gamma(\Ss)$. Let us prove the second one. 

Assume there exists an $\LL(A_0)$-equation~(\ref{eq:222}) with $f_1(s_{i_1\ j_1}^{\eps_1})f_2(s_{i_2\ j_2}^{\eps_2})\ldots f_l(s_{i_l\ j_l}^{\eps_l})\neq 1$ or, equivalently, 
\begin{equation}
\label{eq:1111111}
f_1(p_{i_1+k\ j_1}^{\eps_1})f_2(p_{i_2+k\ j_2}^{\eps_2})\ldots f_l(p_{i_l+k\ j_l}^{\eps_l})\neq 1
\end{equation}

However, $\gamma(\Ss)$ contains the equation $u(Y)=1$~(\ref{eq:333}), and, by (\ref{eq:1111111}), we have $u(P)\neq 1\Rightarrow P\notin\V_G(\gamma(\Ss))$.
\end{proof}

Let $\Ss_0,\Ss_1$ be $\LL(A_0)$-systems in variables $Y=\{y_{ij}\mid i\in\Zbb, 1\leq j\leq n\}$. We say that $\Ss_0,\Ss_1$ are {\it $Z$-equivalent} for a given $Z\subseteq Y$ if the projections of $\V_G(\Ss_0)$ and $\V_G(\Ss_1)$ onto the coordinates $Z$ are the same (in other words, 
for each $P=(p_{ij}\mid i\in\Zbb, 1\leq j\leq n)\in\V_G(\Ss_k)$ there exists $Q=(q_{ij}\mid i\in\Zbb, 1\leq j\leq n)\in\V_G(\Ss_{1-k})$ with $p_{ij}=q_{ij}$ for each $y_{ij}\in Z$, $k\in\{0,1\}$.

\begin{lemma}
Let $\Ss_0$ be an $\LL(A_0)$-system in variables $Y=\{y_{ij}\mid i\in\Zbb, 1\leq j\leq n\}$ over a finite group $G$. Then for any finite $Z\subseteq Y$ there exists a finite $Z$-equivalent subsystem $\Ss_1\subseteq\Ss_0$.
\end{lemma}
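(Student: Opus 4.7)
The plan is to deploy a Tychonoff-style compactness argument. Endow $G$ with the discrete topology; since $G$ is finite, $G^Y$ with the product topology is compact Hausdorff. For every individual $\LL(A_0)$-equation $e\in\Ss_0$ the solution set $\V_G(e)$ depends only on the finitely many variables actually appearing in $e$, so $\V_G(e)$ is clopen in $G^Y$. Consequently $\V_G(\Ss_0)=\bigcap_{e\in\Ss_0}\V_G(e)$ is closed in $G^Y$.

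Since $Z$ is finite and $G$ is finite, $G^Z$ is finite. Let $\pi_Z:G^Y\to G^Z$ denote the (continuous) restriction map and set $T=G^Z\setminus \pi_Z(\V_G(\Ss_0))$, the finite set of ``forbidden'' projections. Fix $\tau\in T$. The fibre $\pi_Z^{-1}(\tau)$ is closed in $G^Y$, hence compact, and by choice of $\tau$ it is disjoint from $\V_G(\Ss_0)$. So the family of closed sets $\{\pi_Z^{-1}(\tau)\cap\V_G(e)\mid e\in\Ss_0\}$ inside the compact space $\pi_Z^{-1}(\tau)$ has empty total intersection; by the finite intersection property some finite subsystem $\Ss_\tau\subseteq\Ss_0$ already satisfies $\pi_Z^{-1}(\tau)\cap\V_G(\Ss_\tau)=\emptyset$.

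Define $\Ss_1=\bigcup_{\tau\in T}\Ss_\tau$, which is a finite union of finite sets, and hence finite. Since $\Ss_1\subseteq\Ss_0$, obviously $\V_G(\Ss_0)\subseteq\V_G(\Ss_1)$, giving one inclusion of projections. Conversely, if $P\in\V_G(\Ss_1)$ had projection $\tau:=\pi_Z(P)\in T$, then $P\in\pi_Z^{-1}(\tau)\cap\V_G(\Ss_\tau)$, contradicting the choice of $\Ss_\tau$; therefore $\pi_Z(\V_G(\Ss_1))\subseteq\pi_Z(\V_G(\Ss_0))$, proving $Z$-equivalence.

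There is essentially no obstacle here: the argument is standard compactness and the only small thing to keep track of is the degenerate case $\V_G(\Ss_0)=\emptyset$, in which $T=G^Z$ and exactly the same construction still produces a finite $\Ss_1$ with empty solution set (hence with empty projection, matching $\pi_Z(\V_G(\Ss_0))$). The role of the hypothesis ``$G$ finite'' is exactly to make both $G^Y$ compact and $G^Z$ finite, so that only finitely many fibres $\pi_Z^{-1}(\tau)$ need to be killed.
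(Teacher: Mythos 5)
Your proof is correct. The paper itself gives no argument here beyond the single sentence that the statement ``immediately follows from the finiteness of the group $G$,'' so your Tychonoff/finite-intersection-property argument is precisely the kind of detail the author leaves implicit: finiteness of $G$ makes $G^Y$ compact and $G^Z$ finite, each equation's solution set is clopen because it constrains only finitely many coordinates, and for each of the finitely many ``forbidden'' projections $\tau\in T$ compactness of the fibre $\pi_Z^{-1}(\tau)$ yields a finite subsystem killing it. All steps check out, including the degenerate case $\V_G(\Ss_0)=\emptyset$ and both inclusions of the projections; one could phrase the same argument for the countable $Y$ of the paper via K\"onig's lemma, but the topological version you give is cleaner and works for arbitrary index sets.
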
  
\begin{proof}
The statement immediately follows from the finiteness of the group $G$.
\end{proof}


Let us denote a subsystem of an $\LL(A_0)$-system $\gamma(\Ss)$ by $\gamma_Z(\Ss)$, if $\gamma_Z(\Ss)$ is $Z$-equivalent to $\gamma(\Ss)$.

Let $Z$ be a set of variables occurring in an $\LL(A_0)$-system $\gamma(\Ss)$. The system $\gamma(\Ss)$ may contain subsystems which are $Z$-equivalent to $\gamma(\Ss)$ (as it proved above, for finite $Z$ such subsystems always exist). Let us denote the class of such systems by $Z(\gamma(\Ss))$. We pick an arbitrary system from $Z(\gamma(\Ss))$ and denote it by $\gamma_Z(\Ss)$.

Suppose an $\LL(A_0)$-system $\gamma_Z(\Ss)$ was constructed by an $\LL(A)$-system $\Ss$ and a finite set $Z$. By the definition, $\gamma^{-1}\gamma_Z(\Ss)\subseteq \Ss$ is the set of equations from $\Ss$ which were essentially used in the construction of $\gamma_Z(\Ss)$. One can apply the operator $\gamma$ to $\gamma^{-1}\gamma_Z(\Ss)$ and obtain a new $\LL(A_0)$-system $\gamma\gamma^{-1}\gamma_Z(\Ss)$.

Let us summarize all simple properties of the systems $\Ss,\gamma(\Ss),\gamma_Z(\Ss),\gamma^{-1}\gamma_Z(\Ss),\gamma\gamma^{-1}\gamma_Z(\Ss)$:
\begin{enumerate}
\item $\Ss,\gamma^{-1}\gamma_Z(\Ss)$ are $\LL(A)$-systems and their solutions belong to $H^n$;
\item $\gamma(\Ss),\gamma_Z(\Ss),\gamma\gamma^{-1}\gamma_Z(\Ss)$ are $\LL(A_0)$-systems and coordinates of their solutions belong to $G$;
\item the systems $\gamma_Z(\Ss),\gamma^{-1}\gamma_Z(\Ss)$ are finite for finite $Z$;
\item we have the inclusions $\gamma^{-1}\gamma_Z(\Ss)\subseteq\Ss$, $\gamma_Z(\Ss)\subseteq\gamma\gamma^{-1}\gamma_Z(\Ss)\subseteq\gamma(\Ss)$.
\item the $\LL(A_0)$-systems $\gamma\gamma^{-1}\gamma_Z(\Ss)$, $\gamma_Z(\Ss)$, $\gamma(\Ss)$ are $Z$-equivalent.
\end{enumerate}

\begin{lemma}
Let $C$ be a finite set of pairs $(i,j)$, $i\in\Zbb$, $1\leq j\leq n$ Then $\gamma\gamma^{-1}\gamma_Z(\Ss)$ is $Z_k$-equivalent to $\gamma(\Ss)$ for any set $Z_k=\{y_{i+k\ j}\mid (i,j)\in C\}$, $k\in\Zbb$.
\label{l:Z_k-equivalence}
\end{lemma}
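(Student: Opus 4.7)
The plan is to reduce the claim to the translation invariance already established in Lemma~\ref{l:homogeneity_of_system}, by observing that the subsystem $\gamma\gamma^{-1}\gamma_Z(\Ss)$ is itself the image of an $\LL(A)$-system under $\gamma$, and is therefore closed under integer shifts of indices exactly like $\gamma(\Ss)$.

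First I would set $\T := \gamma^{-1}\gamma_Z(\Ss)\subseteq\Ss$. Then $\gamma\gamma^{-1}\gamma_Z(\Ss)=\gamma(\T)$. By the very definition of $\gamma$ in~(\ref{eq:gamma_S}), the $\LL(A_0)$-system $\gamma(\T)$ contains, for each of its equations $u(Y)=1$ and every integer $m$, the index-shifted equation as well; hence Lemma~\ref{l:homogeneity_of_system} applies to $\T$ in place of $\Ss$, giving that $\V_G(\gamma\gamma^{-1}\gamma_Z(\Ss))$ is closed under the shift operators $\s_k$. The same is true of $\V_G(\gamma(\Ss))$.

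Next, one direction of the required $Z_k$-equivalence is immediate from property~4: the inclusion $\gamma\gamma^{-1}\gamma_Z(\Ss)\subseteq\gamma(\Ss)$ gives $\V_G(\gamma(\Ss))\subseteq \V_G(\gamma\gamma^{-1}\gamma_Z(\Ss))$, so every solution of $\gamma(\Ss)$ is already a solution of $\gamma\gamma^{-1}\gamma_Z(\Ss)$ with the same $Z_k$-projection. For the reverse direction, given $P\in\V_G(\gamma\gamma^{-1}\gamma_Z(\Ss))$, I would pass to the shifted tuple $\s_k(P)$, which is still a solution by the first paragraph. By property~5, $\gamma\gamma^{-1}\gamma_Z(\Ss)$ is $Z$-equivalent to $\gamma(\Ss)$, so there exists $Q'\in \V_G(\gamma(\Ss))$ with $q'_{ij}=(\s_k(P))_{ij}=p_{i+k\ j}$ for every $(i,j)\in C$. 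Finally, set $Q:=\s_{-k}(Q')$; by Lemma~\ref{l:homogeneity_of_system} applied to $\gamma(\Ss)$, $Q\in\V_G(\gamma(\Ss))$, and a direct computation yields, for each $(i,j)\in C$,
\[
q_{i+k\ j}=(\s_{-k}(Q'))_{i+k\ j}=q'_{(i+k)+(-k)\ j}=q'_{ij}=p_{i+k\ j},
\]
so $P$ and $Q$ agree on $Z_k$, completing the $Z_k$-equivalence.

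The only conceptual obstacle is recognising that translation invariance is not lost in the passage from $\gamma(\Ss)$ to $\gamma\gamma^{-1}\gamma_Z(\Ss)$: although $\gamma_Z(\Ss)$ itself is finite and typically not shift-invariant, taking $\gamma\gamma^{-1}$ of it restores full shift invariance because $\gamma^{-1}$ lifts each retained $\LL(A_0)$-equation to the $\LL(A)$-equation it came from, and $\gamma$ then re-adds all of its integer shifts. Once this is clear, the proof reduces to the bookkeeping argument above using $\s_k$ and $\s_{-k}$.
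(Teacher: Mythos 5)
Your proof is correct and follows essentially the same route as the paper's: shift $P$ by $\s_k$, invoke the shift-invariance of $\V_G(\gamma\gamma^{-1}\gamma_Z(\Ss))$ via Lemma~\ref{l:homogeneity_of_system}, use the $Z$-equivalence to find a matching solution of $\gamma(\Ss)$, and shift back by $\s_{-k}$. Your explicit observation that Lemma~\ref{l:homogeneity_of_system} applies to $\gamma\gamma^{-1}\gamma_Z(\Ss)$ because it equals $\gamma(\T)$ for the $\LL(A)$-system $\T=\gamma^{-1}\gamma_Z(\Ss)$, and your handling of the easy direction of the equivalence via the inclusion $\gamma\gamma^{-1}\gamma_Z(\Ss)\subseteq\gamma(\Ss)$, are points the paper leaves implicit.
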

\begin{proof}
Let us take a point $P=(p_{ij}\mid i\in\Zbb,1\leq j\leq n)\in\V_G(\gamma\gamma^{-1}\gamma_Z(\Ss))$ and consider the shift $R=\s_k(P)=(r_{ij}\mid i\in\Zbb,1\leq j\leq n)$, $r_{ij}=p_{i+k\ j}$. According to Lemma~\ref{l:homogeneity_of_system}, $R$ is a solution of $\V_G(\gamma\gamma^{-1}\gamma_Z(\Ss))$. By the $Z$-equivalence, there exists a point $R^\pr=(r_{ij}^\pr\mid i\in\Zbb,1\leq j\leq n)\in\V_G(\gamma(\Ss))$ with $r_{ij}^\pr=r_{ij}=p_{i+k\ j}$ for any $(i,j)\in C$. By Lemma~\ref{l:homogeneity_of_system}, the point $R^{\pr\pr}=\s_{-k}(R^\pr)$, $R^{\pr\pr}=(r^{\pr\pr}_{ij}\mid i\in\Zbb,1\leq j\leq n)$ is a solution of   $\gamma(\Ss)$. By the definition of $R^{\pr\pr}$, for each $(i,j)\in C$ we have $r^{\pr\pr}_{i+k\ j}=r^\pr_{ij}=r_{ij}=p_{i+k\ j}$, and, therefore, $\gamma\gamma^{-1}\gamma_Z(\Ss)$ is $Z_k$-equivalent to $\gamma(\Ss)$.
\end{proof}

\begin{lemma}
The $\LL(A)$-group $H$ is $\qq$-compact.
\label{l:main}
\end{lemma}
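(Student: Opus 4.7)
The plan is to translate the problem through the coordinate-wise correspondence $\gamma$ into a system of $\LL(A_0)$-equations over the finite group $G$, exploit the homogeneity supplied by Lemma~\ref{l:homogeneity_of_system} to handle the infinitely many coordinate equations simultaneously, and then translate back.

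First I would write the $\LL(A)$-equation $w(X)=1$ in the canonical form~(\ref{eq:equation_over_H}), so that $\gamma(\{w(X)=1\})=\{w_k(Y)=1\mid k\in\Zbb\}$, where each $w_k$ is the shift by $k$ of a single equation $w_0$, and its variables lie in a finite set $Z_k=\{y_{i+k\ j}\mid (i,j)\in C\}$ for some fixed finite $C$ determined by $w$. Since solutions of $\Ss$ in $H^n$ correspond bijectively to solutions of $\gamma(\Ss)$ in $G^Y$, the hypothesis $\V_H(\Ss)\subseteq\V_H(w(X)=1)$ rewrites as $\V_G(\gamma(\Ss))\subseteq\V_G(w_k=1)$ for every $k\in\Zbb$.

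Next I would produce a finite subsystem. Let $Z=Z_0$, which is finite, and apply the preceding lemma (which rests on finiteness of $G$) to obtain a finite $Z$-equivalent subsystem $\gamma_Z(\Ss)\subseteq\gamma(\Ss)$. Set $\Ss^\pr=\gamma^{-1}\gamma_Z(\Ss)$; by the listed properties this is a finite subsystem of $\Ss$, and $\gamma(\Ss^\pr)=\gamma\gamma^{-1}\gamma_Z(\Ss)$.

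Finally I would verify the required inclusion using shift-invariance. By Lemma~\ref{l:Z_k-equivalence}, $\gamma\gamma^{-1}\gamma_Z(\Ss)$ is $Z_k$-equivalent to $\gamma(\Ss)$ for every $k\in\Zbb$. Fix $k$ and take $P\in\V_G(\gamma(\Ss^\pr))$; there exists $P^\pr\in\V_G(\gamma(\Ss))$ agreeing with $P$ on the coordinates indexed by $Z_k$. Since the equation $w_k=1$ involves only variables in $Z_k$ and $w_k(P^\pr)=1$ by hypothesis, we get $w_k(P)=1$. Hence $\V_G(\gamma(\Ss^\pr))\subseteq\V_G(w_k=1)$ for every $k$, which translates back to $\V_H(\Ss^\pr)\subseteq\V_H(w(X)=1)$, as needed. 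The main obstacle is that $\gamma$ sends a single $\LL(A)$-equation over $H$ to infinitely many $\LL(A_0)$-equations over $G$, so naive finiteness of $G$ would only control one coordinate equation at a time; the combination of Lemma~\ref{l:homogeneity_of_system} and Lemma~\ref{l:Z_k-equivalence} is precisely what lets a single finite subsystem handle all shifts $k\in\Zbb$ at once.
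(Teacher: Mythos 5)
Your proposal is correct and follows essentially the same route as the paper: take $Z$ to be the finite set of coordinates appearing in $w$, pass to the finite subsystem $\Ss^\pr=\gamma^{-1}\gamma_Z(\Ss)$ via finiteness of $G$, and use the shift-invariance of $\gamma(\Ss)$ (Lemma~\ref{l:homogeneity_of_system} together with Lemma~\ref{l:Z_k-equivalence}) to control all the shifted coordinate equations $w_k=1$ at once. The only cosmetic difference is that you argue the inclusion directly while the paper assumes a point of $\V_H(\Ss^\pr)\setminus\V_H(w(X)=1)$ and derives a contradiction.
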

\begin{proof}
Suppose an $\LL(A)$-system $\Ss$ and an $\LL(A)$-equation $w(X)=1$~(\ref{eq:equation_over_H}) satisfy~(\ref{eq:q_compact1}). 

The $\LL(A)$-term $w(X)$ defines the set of pairs 
\[
C=\{(k_1,j_1),(k_2,j_2),\ldots,(k_l,j_l)\}.
\]

Let us put $\Ss^\pr=\gamma^{-1}\gamma_Z(\Ss)$ for $Z=\{y_{ij}\mid (i,j)\in C\}$ and prove~(\ref{eq:q_compact2}). Assume there exists a point $(\mathbf{h}_1,\mathbf{h}_2,\ldots,\mathbf{h}_n)\in\V_H(\Ss^\pr)\setminus\V_H(w(X)=1)$, $\mathbf{h}_j\in H$. In other words, there exists $P=(p_{ij}\mid i\in\Zbb,1\leq j\leq n)\in\V_G(\gamma\gamma^{-1}\gamma_Z(\Ss))\setminus\V_G(\gamma(w(X))=1)$. 

We have
\[
\gamma(w(X)=1)=
\{f_1(y_{k_1+k\ j_1}^{\eps_1})f_2(y_{k_2+k\ j_2}^{\eps_2})\ldots f_l(y_{k_l+k\ j_l}^{\eps_l})=1\mid k\in \Zbb\}
\]
and there exists $k\in\Zbb$ such that
\[
f_1(p_{k_1+k\ j_1}^{\eps_1})f_2(p_{k_2+k\ j_2}^{\eps_2})\ldots f_l(p_{k_l+k\ j_l}^{\eps_l})\neq 1
\]

By Lemma~\ref{l:Z_k-equivalence}, there exists a point $Q=(q_{ij}\mid i\in\Zbb,1\leq j\leq n)\in\V_G(\gamma(\Ss))$ with $q_{i+k\ j}=p_{i+k\ j}$ for any $(i,j)\in C$. Therefore, $w(Q)\neq 1$.

Thus, $Q\in \V_G(\gamma(\Ss))\setminus\V_G(\gamma(w(X)=1))$. The point $Q$ defines $R=(\mathbf{r}_1,\mathbf{r}_2,\ldots,\mathbf{r}_n)\in H^n$, $\mathbf{r}_j=(q_{ij}\mid i\in \Zbb)$ such that $R\in \V_G(\Ss)\setminus\V_G(w(X)=1)$, and we obtain a contradiction with~(\ref{eq:q_compact1}).
\end{proof}

\section{Conclusions}

The construction of the group $H$ from Corollary~\ref{cor:ed_for_direct_powers} is close to the notion of wreath product. In particular, the group $H$ from Section~\ref{sec:problem} is structurally similar to the wreath product $G\wr \Zbb$. 

This correspondence allows us to remind an important problem of universal algebraic geometry posed by B.~Plotkin~\cite{plotkin_ispire}.

\medskip

\noindent{\bf Problem }(B.~Plotkin~\cite{plotkin_ispire}). Let $H=A\wr B$ be the wreath product of the groups $A$ and $B$. 
\begin{enumerate}
\item When $H$ is $\qq$-compact?
\item When $H$ is $\qq$-compact but not equationally Noetherian (a group is equationally Noetherian if the subsystem $\Ss^\prime\subseteq\Ss$ in~(\ref{eq:q_compact2}) does not depend on an equation $w(X)=1$)?
\item Is $H$ necessarily $\qq$-compact if both $A,B$ $\qq$-compact?
\end{enumerate}

\medskip

Let us explain the assertion of the problem above. Originally, B.~Plotkin posed it for group equations in the ``standard'' language $\LL=\{\cdot,^{-1},1\}$. However, in~\cite{shev_shah} the Problem was partially solved for languages with constants.

\begin{theorem}\textup{\cite{shev_shah}}
If a group $A$ is not abelian and $B$ is infinite, then $H$ is not $\qq$-compact in the language with constants $\LL(H)=\LL\cup\{h\mid h\in H\}$. 
\end{theorem}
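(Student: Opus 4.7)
The plan is to exhibit an explicit $\LL(H)$-system $\Ss$ and an $\LL(H)$-equation $w(X)=1$ such that $\V_H(\Ss) \subseteq \V_H(w=1)$ yet for every finite subsystem $\Ss' \subseteq \Ss$ one has $\V_H(\Ss') \not\subseteq \V_H(w=1)$. The construction must genuinely invoke both hypotheses: non-abelianness of $A$ supplies $a_1, a_2 \in A$ with $[a_1, a_2] \neq 1$, and infiniteness of $B$ supplies an injective sequence of distinct non-identity elements $b_1, b_2, \ldots \in B$. For $a \in A$ and $b \in B$, let $a^{[b]} \in H$ denote the base-group element whose value at $b$ is $a$ and elsewhere is the identity; each $a^{[b]}$ is an $\LL(H)$-constant, and two such constants at distinct positions have disjoint supports in $A^{(B)}$ and hence commute.

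I would first take $\Ss$ to be the infinite family $\{[x, a_1^{[b_n]}] = 1 : n \geq 1\}$, possibly augmented by analogous equations using $a_2$. A direct wreath-product commutator computation, structurally identical to the one in the proof of Lemma~\ref{l:u_compact}, shows that any solution $x = (f, c) \in H$ must satisfy $c = 1_B$ (a nontrivial $c$-part would force $a_1 = 1$, a contradiction) and $f(b_n) \in C_A(a_1)$ for every $n$. Conversely, for each finite $\Ss' \subseteq \Ss$, pick an index $m$ outside the finite set of indices appearing in $\Ss'$ and set $x_{\Ss'} = a_2^{[b_m]}$: this element satisfies $\Ss'$ vacuously, since its only non-trivial base-group value is at position $b_m$, on which no equation of $\Ss'$ imposes any constraint. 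The equation $w$ is then to be chosen as a single global condition violated by all such ``far-away'' witnesses; when the centralizer structure of $A$ is favourable, a natural candidate is $w = [x, d] = 1$ for a fixed $d \in B$ of infinite order, which on base-group elements forces $d$-shift-invariance and hence (by finite support) triviality, so that $\V_H(w=1)$ coincides with the subgroup $B \subseteq H$.

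The main obstacle is calibrating $\Ss$ and $w$ together so that both inclusions are genuine. In the clean case when $a_1, a_2$ can be chosen with $C_A(a_1) \cap C_A(a_2) = \{1\}$ (for instance, when $Z(A) = 1$), augmenting $\Ss$ with the $a_2$-analogues collapses each $f(b_n)$ to the identity, placing $\V_H(\Ss)$ inside $B \subseteq H$, and the equation $w = [x,d] = 1$ closes the argument. In the general case, in particular when $Z(A) \neq 1$ and a fortiori when $\exp Z(A) = \exp A$ (e.g.\ for extra-special $p$-groups of exponent $p$), the pointwise centralizer constraints leave a $Z(A)$-valued residue at each position, and $w$ must combine a base-group commutation with a top-group equation involving $B$ -- exploiting that no infinite-order $d \in B$ can centralize a nontrivial finitely-supported $f$ -- in order to simultaneously hold on $\V_H(\Ss)$ and fail at every single-coordinate witness $x_{\Ss'}$. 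Producing this correctly balanced $w$ is the delicate creative step, which the paper~\cite{shev_shah} presumably addresses by a case-analysis of the wreath-product commutator structure.
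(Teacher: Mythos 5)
First, note that the paper does not prove this statement at all: it is quoted from~\cite{shev_shah} as an external result, so there is no internal proof to compare with. Judged on its own terms, your sketch follows the standard template (an infinite system $\Ss$ of commutation conditions with ``delta'' constants, a single equation $w$, and for each finite $\Ss'$ a witness supported at a coordinate not mentioned by $\Ss'$), and the special case where one can choose non-commuting $a_1,a_2$ with $C_A(a_1)\cap C_A(a_2)=\{1\}$ is essentially correct: there $\V_H(\Ss)=\{1\}$ (your computation that any solution has trivial top part and pointwise centralizing base part is right), so even $w:\ x=1$ finishes, and no element of infinite order in $B$ is needed. Two minor repairs in that case: index the system by all of $B$ rather than by a countable sequence (otherwise uncountably many coordinates are unconstrained), and drop the infinite-order hypothesis on $d$, which an infinite $B$ need not satisfy.

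The genuine gap is the general case, which you explicitly leave open, and it is not a routine detail: the theorem covers every non-abelian $A$, including those of nilpotency class $2$ (e.g.\ $Q_8$ or extraspecial $p$-groups), where $C_A(a_1)\cap C_A(a_2)\supseteq Z(A)\neq\{1\}$ for every pair, so $\V_H(\Ss)$ always retains all finitely supported $Z(A)$-valued base elements. Worse, the repair you propose points in the wrong direction: any $w$ that entails a top-group commutation $[x,d]=1$ with $d\neq 1$ is \emph{violated} by those nontrivial $Z(A)$-valued elements of $\V_H(\Ss)$ (a nontrivial finitely supported function is never invariant under a nontrivial shift), so adding such a condition destroys the required inclusion $\V_H(\Ss)\subseteq\V_H(w=1)$ instead of rescuing it. On the other hand, a single equation built only from base-group constants of the restricted base $A^{(B)}$ can only ``see'' finitely many coordinates and therefore cannot separate the $Z(A)$-valued solutions from a far-away witness $a_2^{[b_m]}$. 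So the construction cannot be completed along the lines you indicate; the general case requires a structurally different choice of $\Ss$ and $w$ (this is exactly what~\cite{shev_shah} supplies), and as it stands your argument proves the theorem only under the additional hypothesis that some two non-commuting elements of $A$ have trivially intersecting centralizers.
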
 

\medskip

Thus, for the language $\LL(H)$ the following problem remains open.

\medskip

\noindent{\bf Problem.} Let us consider the class of $\LL(H)$-equations. Is $H$ $\qq$-compact (equationally Noetherian) for abelian $A$? 

\medskip

In the conclusion of the whole paper, we should discuss other ways to solve Problems 4.4.7,~5.3.1-4 from~\cite{DMR_monograph}. Usually (see~\cite{DMR_monograph}), the negative solution of a problem in universal algebraic geometry may be found in structures of pure relational languages, since such languages admit a very simple view of equations. 

However, we cannot solve Problems~5.3.1-4 in relational languages. For this reason, we had to develop the algebraic geometry over equations with automorphisms. Thus, one can formulate a problem.

\medskip

\noindent{\bf Problem.} Is there an algebraic structure $\Acal$ of pure relational language $L$ such that 
\begin{enumerate}
\item $\Acal$ is an $L$-equational domain,
\item $\Acal$ is $\qq$-compact,
\item $\Acal$ is not $\uu$-compact?
\end{enumerate}

The information of the author:

Artem N. Shevlyakov

Sobolev Institute of Mathematics

644099 Russia, Omsk, Pevtsova st. 13

Phone: +7-3812-23-25-51.

\bigskip

Omsk State Technical University

644050 Russia, Omsk, pr. Mira, 11

\bigskip

e-mail: \texttt{a\_shevl@mail.ru}
\end{document}